\newtheorem{theorem}{Theorem}[section]
\newtheorem{lemma}[theorem]{Lemma}
\newtheorem{proposition}[theorem]{Proposition}
\begin{document}
\title{The travel time in a finite box\\ 
in supercritical Bernoulli percolation}


\author{Rapha\"el Cerf\\
Universit\'e Paris Sud and IUF}



\maketitle
\begin{abstract}
We consider the standard site percolation model on the three dimensional
cubic lattice.
Starting solely with the hypothesis
that $\theta(p)>0$, we prove that, for any $\alpha>0$, there
exists $\kappa>0$ such that, with probability larger than 
$1-1/n^\alpha$, every pair of vertices inside the box $\Lambda(n)$
are joined by a path having at most
$\kappa(\ln n)^2$ closed sites.
\end{abstract}



 \def \Z {{\mathbb Z}}
 \def \R {{\mathbb R}}
 \def \C {{\mathbb C}}
 \def \cE {{\mathcal E}}
 \def \cF {{\mathcal F}}
 \def \cC {{\mathcal C}}
 \def \cT {{\mathcal T}}
 \def \cP {{\mathcal P}}
 \def \cS {{\mathcal S}}
 \def \bC {{\overline C}}
 \def \bE {{\overline E}}
 \def \bA {{\overline A}}
 \def \bB {{\overline B}}
 \def \ta {\text{two--arms}}
 \def \N {{\mathbb N}}
 \def \P {{\mathbb P}}
 \def \E {{\mathbb E}}

\newcommand{\bd}{\partial\, } 
\newcommand{\din}{\partial^{\, in}}
\newcommand{\dout}{\partial^{\, out}}
\newcommand{\dini}{\partial^{\, in}_\infty}
\newcommand{\douti}{\partial^{\, out}_\infty}
\newcommand{\dine}{\partial^{\, in}_{ext}}
\newcommand{\doutie}{\partial^{\, out,ext}_{\infty}}
\newcommand{\doute}{\partial^{\, out,ext}}
\newcommand{\dedge}{\partial^{\, edge}}
\newcommand{\dexte}{\partial^{ext,\, edge}}
\newcommand{\dstar}{\partial^{\, *}}
\newcommand{\dcirc}{\partial^{\, o}}

\section{Introduction}
We consider the site percolation model on $\Z^3$.
Each site is declared open with probability~$p$
and closed with probability~$1-p$, and the sites
are independent.
One of the most important problems in percolation is to prove that, in
three dimensions, there is no infinite cluster at the critical point.
The most promising strategy so far seems to perform a renormalization
argument \cite{GR}. The missing ingredient is a suitable construction
helping to define a good block, starting solely with the hypothesis
that $\theta(p)>0$. Our main result here is an estimate on the 
travel time in a finite box under the hypothesis that $\theta(p)>0$.
For $n\in\N$, we denote by $\Lambda(n)$ the cubic box
$\Lambda(n)=[-n,n]^3$.
\begin{theorem}
\label{fm}
Let $p$ be such that $\theta(p)>0$ and
let $\alpha>0$. There exists a constant 
$\kappa$, depending on $\alpha$ and $p$,
such that
$$\forall n\geq 2\qquad 
P
\left(\,
\lower 12pt
\vbox{
\hbox{every pair of vertices of the box $\Lambda(n)$}
\hbox{are joined by a path in $\Lambda(n)$ having}
\hbox{\quad\, at most $\kappa(\ln n)^2$ closed sites}
}
\,\right)
\,\geq\,1-\frac{1}{n^\alpha}\,.$$
\end{theorem}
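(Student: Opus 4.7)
The natural plan is a multi-scale renormalization combined with a union bound. Fix a mesoscopic scale $L=c\ln n$ and tile $\Lambda(n)$ by cubes of side $L$. For each such cube $B$, declare it \emph{good} if it contains a distinguished large open cluster $C^\star(B)$ of size at least $\tfrac12\theta(p)L^3$ and, crucially, every vertex $x\in B$ can be joined to $C^\star(B)$ by a path in a slight enlargement of $B$ using at most $\kappa_1\ln n$ closed sites. Under the hypothesis $\theta(p)>0$ alone, the density condition follows from the ergodic theorem applied inside $B$, while the attachment condition requires an estimate whose probability of failure is stretched-exponentially small in $L$: using uniqueness of the infinite cluster in three dimensions together with finite-energy and two-arms arguments, one should show that the probability that $x$ is separated from $C^\star(B)$ by more than $\kappa_1\ln n$ closed sites is at most $n^{-\alpha-5}$. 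A union bound over the $O(n^3)$ cubes then makes all cubes good with probability at least $1-n^{-\alpha}/2$.

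Given two vertices $x,y\in\Lambda(n)$, the attachment property of good cubes lets one reach $C^\star(B_x)$ from $x$ and $C^\star(B_y)$ from $y$, each using at most $\kappa_1\ln n$ closed sites. It remains to construct a path from $C^\star(B_x)$ to $C^\star(B_y)$ through the intermediate cubes. The distinguished clusters $C^\star(B)$ and $C^\star(B')$ of adjacent cubes both touch their common face, but they are not automatically directly connected, so one must pay a few closed sites at each interface to merge them. A one-by-one merging across every face crossed by a straight segment between $B_x$ and $B_y$ would cost $\Theta(n/L)$ and blow up; instead, the merging must be arranged hierarchically, with a ``backbone'' of merged clusters at each of $O(\log(n/L))$ coarser scales, so that only $O(\ln n)$ merges, each costing $O(\ln n)$ closed sites, lie between $x$ and $y$. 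This yields the announced $\kappa(\ln n)^2$ bound, and a final union bound over pairs $(x,y)\in \Lambda(n)^2$ closes the argument.

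The main obstacle is the attachment estimate, together with its cousin the merging estimate, both carried out with \emph{only} the hypothesis $\theta(p)>0$. The paper's contribution is presumably to replace the stronger assumption $p>p_c$ (or slab percolation) invoked in classical renormalization arguments by a careful finite-energy/two-arms analysis that exploits uniqueness of the infinite cluster in dimension three. Obtaining stretched-exponential (rather than merely polynomial) decay in $L$ is what allows the polynomial-cardinality union bound to close with only $(\ln n)^2$ closed sites, so the quantitative aspect of these estimates must form the technical heart of the proof.
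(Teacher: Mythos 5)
Your plan leaves the two statements that would carry all the difficulty as assertions rather than proofs, and at least one of them is very unlikely to be provable in the form you state under the sole hypothesis $\theta(p)>0$. First, the existence of a distinguished cluster $C^\star(B)$ of density $\tfrac12\theta(p)$ in a box of side $L=c\ln n$, with failure probability $n^{-\alpha-5}$, does not follow from the ergodic theorem: the ergodic theorem gives an almost sure limit along growing boxes with no rate, whereas you need a quantitative deviation bound at the very small scale $\ln n$. Quantitative statements about the density of the largest cluster in a box are exactly the kind of result that classically requires $p$ to exceed the slab threshold, i.e.\ the stronger hypothesis the paper is explicitly avoiding; the vertices of $B$ that are connected to infinity have expected density $\theta(p)$, but nothing guarantees they form a single cluster inside (an enlargement of) $B$. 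Second, the attachment and merging estimates are introduced with ``one should show'' and ``pay a few closed sites,'' with no mechanism producing the claimed stretched-exponential decay; ``finite-energy and two-arms arguments'' are named but not deployed, and the hierarchical backbone is not specified precisely enough to see why a merge at a coarser scale still costs only $O(\ln n)$ closed sites. As written, the proposal is a correct-looking outline whose technical heart is missing.

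The paper's actual route avoids all of this and is worth contrasting. Its only probabilistic input is a single BK-inequality lemma: for \emph{any} finite $A\ni x$, the event $\{T_A(x,\din A)\geq k\}$ forces $k$ disjoint occurrences of $\{x\not\leftrightarrow\infty\}$ (via the nested layers $C_k(x)$ and their closed outer boundaries), hence $P(T_A(x,\din A)\geq k)\leq(1-\theta(p))^k$ uniformly in $A$. There is no good/bad block dichotomy, no distinguished cluster, and no merging step. FKG together with the lattice symmetries upgrades the exit estimate to an exit through a \emph{prescribed} portion of the boundary (one of $24$ squares on a cube, or one of $48$ spherical triangles of diameter strictly less than $1$ on a sphere), at the price of dividing the exponent by $24$ or $48$. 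A deterministic navigation then chains $O(\ln n)$ such exits, each costing $c\ln n$ closed sites: first pushing $x$ into $\Lambda(n/4)$ by doubling its distance to $\din\Lambda(n)$ every few steps, then contracting toward $y$ by a factor $0.97$ per step using the spherical triangles. If you want to salvage your renormalization scheme, you would need to replace your attachment estimate by something like this BK lemma anyway; at that point the block structure becomes superfluous.
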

\noindent
This result can be recast in the language of first passage percolation.
If we declare that the travel time is null through an open site and
one through a closed site, and if we denote by
$T_{\Lambda(n)}(x,y)$ the travel time between two points $x,y$ in $\Lambda(n)$,
that is, the infimum of the travel time over
all the paths joining $x$ and $y$ in $\Lambda(n)$, then the above estimate can
be rewritten as
$$\forall n\geq 2\qquad 
P
\big(\,
\forall x,y\in\Lambda(n)\quad
T_{\Lambda(n)}(x,y)\,\leq\,\kappa(\ln n)^2\,\big)
\,\geq\,1-\frac{1}{n^\alpha}\,.$$
The bound $\kappa (\ln n)^2$ is probably not optimal. 
If we start with the hypothesis
that
$p>p_c$, then we get a bound of order $\kappa\ln n$ with the help
of the slab technology. The goal of the game here is to see what we can get
starting only with the hypothesis that $\theta(p)>0$.
The proof relies essentially on the BK and the FKG inequalities
for the probabilistic part (see \cite{GR}), 
and on a tiling of the sphere into $48$ spherical triangles
for the geometric part.
The vertices of these triangles are the vertices of a
Catalan solid called 
the disdyakis dodecahedron or the 
hexakis octahedron (see 
\cite{HO}, page 54 top left for a picture, or \cite{WK}).
This solid is the dual of one of the Archimedean solids, the
great rhombicuboctahedron.
We could write a proof using only cubes, however it would require
more geometric computations. 
With the help of the tiling of the sphere into spherical triangles,
we can build in a straightforward way a path
converging at geometric speed to a prescribed target.
The main point is that the spherical triangles have a diameter strictly
less than one.

\section{Basic notation}
Two sites $x,y$ of the lattice $\Z^3$ are said to be connected if they
are nearest neighbours, i.e., if $|x-y|=1$.
For $x\in\Z^3$, we denote by $C(x)$ the open cluster
containing~$x$, i.e., the connected component of the
set of the open sites containing~$x$.
If $x$ is closed, then $C(x)=\varnothing$.
Let $A$ be a subset of $\Z^3$. We define 
its internal boundary
$\din A$
and its external boundary 
$\dout A$
by
$$\displaylines{
\din A\,=\, \big\{\,x\in A:\exists y\in A^c\quad |x-y|=1\,\big\}\,,\cr
\dout A\,=\, \big\{\,x\in A^c:\exists y\in A\quad |x-y|=1\,\big\} 
\,.
}$$
For $x$ a point in $\Z^3$, 
the distance $d(x,A)$ between $x$ and $A$ is defined as
$$d(x,A)\,=\,
\inf_{y\in A} \,|x-y|\,.$$
Recall that a path 
$z_0,\dots,z_r$
is a sequence of sites such that each site is a 
neighbour of its predecessor:
$$\forall i\in\{\,0,\dots, r-1\,\}\qquad |z_{i+1}-z_i|=1\,.$$
Let $A$ be a subset of $\Z^3$. 
For $x,y$ in $A$, we define the travel time $T_A(x,y)$ between $x$ and $y$
in $A$ by
$$T_A(x,y)\,=\,\inf\,\Big\{\,\sum_{i=0}^r1_{z_i\text{ closed}}:
z_0,\dots,z_r\text{ path in }\Lambda
\text{ from }z_0=x\text{ to }z_r=y\,\Big\}\,.$$
For $x$ in $A$ and $E$ a subset of $A$, we define the travel time 
between $x$ and $E$
in $A$ by
$$T_A(x,E)\,=\,\inf_{y\in E}\,T_A(x,y)\,.$$
\section{An application of the BK inequality}
A routine application of the BK inequality gives a control on the travel
time until the infinite cluster, and this yields a control on the travel
time to exit a finite domain.
\begin{lemma}
\label{BK}
Let $A$ be a finite subset of $\Z^3$ and let $x\in A$.
We have
$$\forall k\geq 1\qquad P\big(\,
T_A(x,\din A)\,\geq \,k
\,\big)\,\leq\,
(1-\theta(p))^k\,.$$
\end{lemma}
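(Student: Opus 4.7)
The plan is to apply the BK inequality to $k$ pairwise disjoint ``finite cluster'' witnesses extracted from a first-passage exploration around $x$, each witness having probability at most $1-\theta(p)$.

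The starting point is the observation that $\{T_A(x,\din A)\ge 1\}\subseteq\{|C(x)|<\infty\}$, which already has probability $1-\theta(p)$. Indeed, if $T_A(x,\din A)\ge 1$, there is no entirely open path inside $A$ from $x$ to $\din A$, so in particular $C(x)\cap\din A=\varnothing$; but $A$ is finite, and any infinite open cluster through $x$ would be forced to exit $A$ through a vertex of $\din A$, which would produce such an open path. This settles the case $k=1$ and supplies the factor that each subsequent stage of the exploration will contribute.

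For general $k$, I would iterate in first-passage fashion. Let $C_1$ be the open cluster of $x$ in $A$; on $\{T_A(x,\din A)\ge k\}$ with $k\ge 2$, $C_1$ misses $\din A$, and any optimal path must exit $C_1$ by crossing some closed site $y_1\in\dout C_1\cap A$, paying one unit. From an open neighbour of $y_1$ in the unexplored region $A\setminus(C_1\cup\dout C_1)$ I would restart the exploration, producing disjoint open clusters $C_1,C_2,\dots,C_k$ with intermediate closed jumps $y_1,\dots,y_{k-1}$. On the event under consideration, each $C_i$ is forced to be finite, for otherwise it would supply an open path to $\din A$ bringing the total number of closed crossings below $k$. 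The $k$ events ``$C_i$ is finite'' are all increasing in the closed-site indicator, each is determined by the states of the sites in $C_i\cup\dout C_i$ only, and these witness sets are pairwise disjoint by construction; the BK inequality therefore applies and yields the product bound $(1-\theta(p))^k$.

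The main obstacle is the combinatorial bookkeeping of the exploration: one has to order the stages so that $C_i\cup\dout C_i$ really are pairwise disjoint and each new $C_i$ is discovered inside the complement of the previously explored region, so that the finite-cluster events are witnessed by disjoint sets of sites. Once this is arranged, BK packages the $k$ witnesses and delivers the desired bound.
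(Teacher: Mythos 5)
Your overall strategy --- extract $k$ pairwise disjoint witnesses, each of probability $1-\theta(p)$, and apply BK --- is the same as the paper's, and your case $k=1$ is correct. But the way you build the witnesses for general $k$ has two genuine gaps. First, the sets $C_i\cup\dout C_i$ are \emph{not} pairwise disjoint in your construction: to certify that $C_i$ is finite you need its entire closed outer boundary $\dout C_i$, yet the closed site $y_i\in\dout C_i$ through which you jump is by definition adjacent to $z_{i+1}\in C_{i+1}$, so $y_i$ also lies in $\dout C_{i+1}$. Consecutive witnesses therefore necessarily overlap, and you cannot delete $y_i$ from either one without destroying its certifying power (a configuration agreeing with $\omega$ on $\dout C_{i+1}\setminus\{y_i\}$ could reconnect $z_{i+1}$ to infinity through $y_i$). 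This is not ``bookkeeping'' that can be ordered away; with clusters of successive neighbouring points the overlap is structural. Second, and more seriously, BK bounds $P(A_1\circ\cdots\circ A_k)$ for \emph{fixed} events $A_i$, whereas your events ``$C_i$ is finite'' concern the clusters of the random points $z_i$, chosen from the configuration itself. The witness $\dout C_i$ certifies $\{z_i\not\leftrightarrow\infty\}$, not $\{x\not\leftrightarrow\infty\}$, and a union over the possible locations of the $z_i$ would cost a combinatorial factor that ruins the bound.

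The paper's proof repairs both defects simultaneously by growing \emph{nested solid regions around the fixed point} $x$, namely $C_j(x)=\{y\in\Z^3: T_{\Z^3}(x,y)\le j\}$, rather than the clusters of new base points. Each outer boundary $\dout C_j(x)$ is entirely closed and separates the \emph{same} point $x$ from infinity, so every shell witnesses the single fixed decreasing event $\{x\not\leftrightarrow\infty\}$; and because $C_{j+1}(x)$ absorbs the previous shell together with everything connected to its outside, the successive shells are pairwise disjoint. Hence $\{T_A(x,\din A)\ge k\}$ is contained in the event that $\{x\not\leftrightarrow\infty\}$ occurs disjointly $k$ times, and BK gives $(1-\theta(p))^k$. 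If you want to keep your exploration picture, replace ``the open cluster of a new point $z_{i+1}$'' by ``the set of all points reachable from $x$ with travel time at most $j$''; that single change makes your argument coincide with the paper's.
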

\begin{proof}
Let $A$ be a finite subset of $\Z^3$ and let $x\in A$.
The event $\{\,x\longleftrightarrow\infty\,\}$ is included in the event
$\{\,x\longleftrightarrow\din A\,\}$, thus
$$P\big(T_A(x,\din A)=0\big)\,\geq\,
P\big( x\longleftrightarrow\infty\big)\,=\,\theta(p)\,,$$
or, passing to the complementary event,
$$P\big(T_A(x,\din A)\geq 1\big)\,\leq\,
1-\theta(p)\,.$$
In fact, if
$T_A(x,\din A)\geq 1$ and $C(x)$ is not empty, 
then $\dout C(x)$, the outer boundary of the
open cluster of $x$, contains a set of closed sites which separates $x$ from
$\infty$. We iterate next this argument.
We set $C_0(x)=C(x)$ and we define successively, for $k\geq 0$,
$$C_{k+1}(x)\,=\,
C_{k}(x)
\,\cup\,
\dout C_{k}(x)
\,\cup\,
\big\{\,y\in\Z^3: y\longleftrightarrow
\dout\big(
\dout C_{k}(x)\big)
\,\big\}\,.$$
It follows directly from this construction that
$$\forall k\geq 0\qquad
C_k(x)\,=\,\big\{\,y\in\Z^3:
T_{\Z^3}(x,y)\,\leq \,k\,\big\}\,.$$
Therefore, 
$$T_A(x,\din A)\,\geq \,k\qquad
\Longrightarrow\qquad
C_{k-1}(x)\,\subset\,A\setminus\din A\,,
\quad
\dout C_{k-1}(x)\,\subset\,A
\,.$$
The set $C_k(x)$ becomes infinite when it meets the infinite open cluster. 
Whenever $C_k(x)$ is finite, its outer boundary
$\dout C_k(x)$ contains a set 
of closed sites which separates $x$ from
$\infty$. This set realizes the event 
$\big\{\, x\kern 4pt\not\kern-5pt\longleftrightarrow\infty\,\big\}$.
Moreover, by construction, the sets
$\dout C_k(x)$, $k\geq 0$, are pairwise disjoint.
Thus we have
$$\big\{\,
T_A(x,\din A)\,\geq \,k
\,\big\}\,\subset\,
\big\{\, x\kern 4pt\not\kern-5pt\longleftrightarrow\infty\,
\text{ occurs disjointly $k$ times}\,\big\}\,.$$
Applying the BK inequality (see for instance \cite{GR}), we conclude that
$$P\big(\,
T_A(x,\din A)\,\geq \,k
\,\big)\,\leq\,
P\big(\, x\kern 4pt\not\kern-5pt\longleftrightarrow\infty\,\big)^k\,\leq\,
(1-\theta(p))^k\,$$
as required.
\end{proof}
\section{Cubic boxes}
We consider here the case of a cubic box $\Lambda$ centered at the origin.
Let $F_i$, $1\leq i\leq 6$, be the faces of $\Lambda$. 
Each face $F_i$ is a square, which is itself
the union of four squares $F_i^j$, 
$1\leq j\leq 4$. Each of these squares shares a vertex 
with a vertex of $\Lambda$
and admits the center of $F_i$ as another vertex.
We have
$$T_\Lambda(0,\din\Lambda)\,=\,
\min_{1\leq i\leq 6}\min_{1\leq j\leq 4}
T_\Lambda(0,F_i^j)$$
and, by the FKG inequality,
$$\displaylines{
P\big(
T_\Lambda(0,\din\Lambda)\geq k\big)
\,=\,
P\big(\forall i\in\{\,1,\dots,6\,\}\quad
\forall j\in\{\,1,\dots,4\,\}\quad
T_\Lambda(0,F_i^j)\geq k\big)\cr
\,\geq\,
\prod_{1\leq i\leq 6}\prod_{1\leq j\leq 4}
P\big( T_\Lambda(0,F_i^j)\geq k\big)
\,=\,
P\big( T_\Lambda(0,F_1^1)\geq k\big)^{24}\,.
}$$
The last inequality is a consequence of the symmetry of the model,
indeed the random variables
$T_\Lambda(0,F_i^j)$, 
$1\leq i\leq 6$, $1\leq j\leq 4$, are identically distributed.
It follows then from lemma~\ref{BK} that
$$P\big( T_\Lambda(0,F_1^1)\geq k\big)\,\leq\,
(1-\theta(p))^{k/24}\,.$$
We deal next with translated boxes. If $\Gamma$ is a cubic box, we denote 
its faces by
$F_i(\Gamma)$, $1\leq i\leq 6$, and we denote by
$F_i^j(\Gamma)$, 
$1\leq i\leq 6$, $1\leq j\leq 4$, the tiling of the faces of $\Gamma$
into four squares.
We consider the event $\cE(\Lambda,k)$ defined as follows:
for every cubic box $\Gamma$ included in $\Lambda$, whose center
is a point of $\Z^3$, whose sidelength
is an integer, the center of $\Gamma$ can be joined to each of the $24$ squares
on the faces of $\Gamma$ with a path having at most $k$ closed sites.
More precisely,
$$
\cE(\Lambda,k)\,=\,
\big\{\,\forall \Gamma=x+\Lambda(m)\subset\Lambda\quad
T_\Lambda(x,F_i^j(\Gamma))\leq k
\text{ for } 1\leq i\leq 6,\,1\leq j\leq 4\,\big\}
\,.
$$
\begin{proposition}
\label{cube}
Let $p$ be such that $\theta(p)>0$ and
let $\alpha>0$. There exists a constant 
$c$, depending on $\alpha$ and $p$,
such that
$$\forall n\geq 2\qquad 
P\big( \cE(\Lambda(n),c\ln n)\big)
\,\geq\,1-\frac{c}{n^\alpha}\,.$$
\end{proposition}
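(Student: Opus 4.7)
The plan is to combine the single-box estimate just established with translation invariance and a polynomial union bound. First I would observe that the chain of inequalities derived above for the cube centered at the origin relies only on the symmetries of the cube, the FKG inequality, and lemma~\ref{BK}. All three are translation invariant, so the same argument applies verbatim to any cubic box $\Gamma=x+\Lambda(m)$ with $x\in\Z^3$ and $m\in\N$:
$$P\big(T_\Gamma(x,F_i^j(\Gamma))\geq k\big)\,\leq\,(1-\theta(p))^{k/24}$$
for every $1\leq i\leq 6$ and $1\leq j\leq 4$. Since $\Gamma\subset\Lambda(n)$, any path realizing a given travel time inside $\Gamma$ is also available inside $\Lambda(n)$, hence $T_{\Lambda(n)}(x,F_i^j(\Gamma))\leq T_\Gamma(x,F_i^j(\Gamma))$ and the same tail bound holds with $\Lambda(n)$ in place of $\Gamma$.

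Next I would count the relevant events. The center $x$ of $\Gamma$ takes at most $(2n+1)^3$ integer values in $\Lambda(n)$, the sidelength $m$ takes at most $n+1$ integer values, and each box carries $24$ quarter-faces. A simple union bound therefore yields
$$P\big(\cE(\Lambda(n),k)^c\big)\,\leq\,24\,(2n+1)^3(n+1)\,(1-\theta(p))^{k/24}\,.$$

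Finally I would set $k=c_1\ln n$ and choose $c_1$ large enough that
$$c_1\cdot\frac{-\ln(1-\theta(p))}{24}\,\geq\,4+\alpha\,,$$
so that $(1-\theta(p))^{k/24}\leq n^{-(4+\alpha)}$ and the upper bound above is absorbed into $c/n^\alpha$ for a suitable constant $c$ depending on $\alpha$ and $p$. This gives exactly the statement of the proposition. There is no serious obstacle: the substantive work, namely the BK tail bound of lemma~\ref{BK} and the FKG decorrelation into $24$ quarter-faces, has already been done, and all that remains is to verify that a logarithmic $k$ beats the $O(n^4)$ polynomial count from the union bound, which is automatic from the explicit computation of $(1-\theta(p))^{c_1\ln n/24}$.
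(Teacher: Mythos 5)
Your proposal is correct and follows essentially the same route as the paper: apply the $24$-fold FKG/BK estimate to each subbox via translation invariance, use monotonicity of the travel time under enlargement of the ambient domain, take a union bound over the $O(n^4)$ choices of center, sidelength and quarter-face, and choose $c$ large enough that $(1-\theta(p))^{c\ln n/24}$ beats the polynomial count. Your explicit remark that $T_{\Lambda(n)}(x,F_i^j(\Gamma))\leq T_\Gamma(x,F_i^j(\Gamma))$ is a small but welcome clarification that the paper leaves implicit.
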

\begin{proof}
Let us estimate the probability of the complement of the event
$\cE(\Lambda,k)$:
\begin{multline*}
P\big( \cE(\Lambda,k)^c\big)\,=\,
P\big(\exists\, \Gamma=x+\Lambda(m)\subset\Lambda\quad
\exists\,i,j\quad
T_\Lambda(x,F_i^j(\Gamma))> k\big)
\cr
\,\leq\,
\sum_{x\in\Lambda}
\sum_{m:x+\Lambda(m)\subset\Lambda}
\sum_{i,j}
\,\,
P\big(
T_\Lambda(x,F_i^j(
x+\Lambda(m)
))> k
\big)\,.
\end{multline*}
By translation invariance and symmetry, 
the probability 
inside the sum
depends neither on $x$ nor on $i,j$.
The number of subboxes $\Gamma$
included in $\Lambda$ is bounded by $|\Lambda|\times\text{diameter}(\Lambda)$, 
so we conclude with the help of the previous estimate that
$$P\big( \cE(\Lambda,k)^c\big)
\,\leq\,
|\Lambda|\times\text{diameter}(\Lambda) 
\times 24
\times
(1-\theta(p))^{k/24}\,.
$$
We take now $\Lambda=\Lambda(n)$ and $k=c\ln n$.
For any $\alpha>0$, we can choose the constant
$c$ sufficiently large so that the righthand side is smaller than 
$cn^{-\alpha}$ for any $n\geq 1$.
\end{proof}
\begin{proposition}
\label{tcube}
Let $n\geq 1$ and suppose that the event $\cE(\Lambda(n),c\ln n)$ occurs.
There exists a constant $c'$ such that
$$\forall x\in\Lambda(n) \setminus\Lambda(n/4)\quad
\exists\,
y \in\Lambda(n/4)\qquad
T_{\Lambda(n)}(x,y)\,\leq\,c'(\ln n)^2\,.$$
\end{proposition}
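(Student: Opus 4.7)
The plan is to iterate the local step supplied by $\cE(\Lambda(n), c \ln n)$ to build a chain of lattice points $y_0 = x, y_1, \ldots, y_K$ with $y_K \in \Lambda(n/4)$, where $K = O(\ln n)$ and each hop satisfies $T_{\Lambda(n)}(y_i, y_{i+1}) \leq c \ln n$. Telescoping the costs then gives $T_{\Lambda(n)}(x, y_K) \leq K c \ln n = O((\ln n)^2)$, and one sets $y = y_K$.

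At step $i$, given $y_i \notin \Lambda(n/4)$ and writing $r_i := \max_j |y_{i,j}|$, I take a box $\Gamma_i = y_i + \Lambda(m_i) \subset \Lambda(n)$ (so $m_i \leq n - r_i$) and pick the face quarter $F_i^\star$ of $\Gamma_i$ pointing \emph{toward the origin}: the one on the face of $\Gamma_i$ orthogonal to a coordinate direction of largest modulus of $y_i$, sharing the cube vertex of $\Gamma_i$ of smallest $\ell^\infty$ norm. By the assumed event $\cE(\Lambda(n), c \ln n)$, there exists $y_{i+1} \in F_i^\star$ with $T_{\Lambda(n)}(y_i, y_{i+1}) \leq c \ln n$.

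The geometric analysis, with $s_i := n - r_i$, splits into two regimes. In the boundary regime $r_i > 2n/3$, the maximal choice $m_i = n - r_i$ forces the coordinate orthogonal to the chosen face to pass from $\pm r_i$ to $\pm(2 r_i - n)$, while the other two coordinates of $y_{i+1}$ lie in intervals of length $n - r_i$; a case analysis on how many coordinates of $y_i$ are close to $r_i$ (edge versus diagonal configurations of the cube) shows that over at most three consecutive hops, $s_i$ at least doubles, so $O(\ln n)$ hops suffice to reach $s_i \geq n/3$. In the interior regime $r_i \leq 2n/3$, the choice $m_i = \lceil r_i / 2 \rceil$ is admissible, and an analogous three-step estimate yields $r_{i+3} \leq r_i / 2 + O(1)$; a further $O(\ln n)$ iterations bring $r_i$ below $n/4$.

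The main obstacle is precisely this geometric bookkeeping: $\cE$ asserts only the \emph{existence} of a path to some unspecified point of the chosen face quarter, so $y_{i+1}$ can land anywhere in that $m_i \times m_i$ square. When $y_i$ has two or three coordinates of magnitude near $r_i$ (i.e., $y_i$ is close to a face diagonal or the main diagonal of the cube), a single hop may fail to reduce $r_i$ at all, and the argument must chain up to three consecutive hops, each effectively eliminating one large coordinate, to force the doubling of $s_i$ (respectively the halving of $r_i$). The degenerate boundary case $|x|_\infty = n$ — where no nontrivial box centered at $y_0 = x$ is admissible — is handled by an initial $O(1)$-cost step from $x$ to a strictly interior nearest neighbor, after which the main iteration applies.
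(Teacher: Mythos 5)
Your proposal is correct and follows essentially the same strategy as the paper: iterate the quarter-face hops guaranteed by $\cE(\Lambda(n),c\ln n)$, choosing at each step a cube centered at the current point and the quarter face pointing inward, so that the distance to $\din\Lambda(n)$ at least doubles every three hops (three being needed precisely because of the edge/diagonal configurations you flag), giving $O(\ln n)$ hops of cost $c\ln n$ each. The only, immaterial, difference is the endgame: the paper stops the doubling upon entering $\Lambda(3n/4)$ and finishes with a bounded number of side-$n/4$ boxes, whereas you switch to a halving scheme for $|y_i|_\infty$; both cost $O(\ln n)$ further hops.
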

\begin{proof}
We build iteratively a sequence travelling from $x$ to the box $\Lambda(3n/4)$.
We start from $y_0=x$.
If $x$  belongs to $\din\Lambda(n)$, we choose for $y_1$ a site in
$\Lambda(n)\setminus \din\Lambda(n)$ such that $|x-y_1|\leq 2$. 
If $x$  belongs to 
$\Lambda(n)\setminus \din\Lambda(n)$, we set $y_1=x$.
Suppose that $y_0,\dots, y_m$ have been built in such a way that the following
four conditions are satisfied for any $l\in\{\,1,\dots,m-1\,\}$:
\smallskip

\noindent
$\bullet\quad$ $y_l\in\Lambda(n)\setminus\Lambda(3n/4)$.
\smallskip

\noindent
$\bullet\quad$
$T_{\Lambda(n)}(y_l, y_{l+1})\,\leq\,c\ln n$.
\smallskip

\noindent
$\bullet\quad$
$\forall i\in\{\,1,\dots,6\,\}\qquad
d(y_{l+1},F_i(\Lambda(n)))\,\geq\, d(y_l,F_i(\Lambda(n)))$.
\smallskip

\noindent
$\bullet\quad$
If $h$ is the smallest index such that
$d(y_l,\din\Lambda(n))\,=\, d(y_l,F_h(\Lambda(n)))$, 
then 
$$d(y_{l+1},F_h(\Lambda(n)))\,\geq\, 2d(y_l,F_h(\Lambda(n))) \,.$$
\begin{figure}
\centerline{
\psset{unit=0.45cm}
\begin{pspicture}(-10,-10)(10,10)
\psset{dotscale=1.5}
\psset{dotstyle=o}
\psset{dotstyle=x}
\pspolygon[linewidth=1pt](-10,-10)(-10,10)
(10,10)(10,-10)(-10,-10)
\pspolygon[linewidth=1pt,linestyle=dashed](-7.5,-7.5)(-7.5,7.5)
(7.5,7.5)(7.5,-7.5)(-7.5,-7.5)
\pspolygon[linewidth=2pt](10,3)(10,7)(6,7)(6,3)(10,3)
\uput[-180]{0}(4,4){$F_i^j(\Gamma)$}
\psline[linewidth=1pt]{->}(4,4)(6,4)
\uput[-180]{0}(4,5){$F_i(\Gamma)$}
\psline[linewidth=1pt]{->}(4,5)(6,5)
\uput[-180]{0}(6.5,0){$F_h(\Lambda(n))$}
\psline[linewidth=1pt]{->}(6.5,0)(10,0)
\uput[90]{0}(0,-10){$\Lambda(n)$}
\uput[90]{0}(0,-7.5){$\Lambda(3n/4)$}
\uput[90]{0}(8,3){$\Gamma$}
\uput[90]{0}(0,-0.0){$O$}
\uput[90]{0}(8.2,5){$y_m$}
\psdots(8,5)
\psdots(0,0)
\psset{dotscale=1.5}
\psset{dotstyle=o}
\end{pspicture}
}
\end{figure}
\noindent
If $y_m$ belongs to 
$\Lambda(3n/4)$, the construction terminates. 
Suppose that
$y_m$ does not belong to 
$\Lambda(3n/4)$.
We will next find a site $y_{m+1}$ so that the sequence
$y_0,\dots,y_{m+1}$ satisfies the four conditions above.
Let $\Gamma$ be the largest cubic box centered at $y_m$ included in $\Lambda(n)$.
Let $h$ be the smallest index such that
$$d(y_m,\din\Lambda(n))\,=\, d(y_m,F_h(\Lambda(n))) \,.$$
One face of $\Gamma$ is included in $F_h(\Lambda(n))$. Let $i\in\{\,1,\dots,6\,\}$
be the index such that $F_i(\Gamma)$ is the opposite face. We have then
$$d( F_i(\Gamma), F_h(\Lambda(n)))\,\geq\,
2 d(y_m,F_h(\Lambda(n))) \,.$$
We choose next the index $j$ so that $F_i^j(\Gamma)$ is the 
square included in $F_i(\Gamma)$
which is among the deepest inside the box $\Lambda(n)$. More precisely, we choose
$j$ in $\{\,1,\dots,4\,\}$ such that
$$\forall l\in\{\,1,\dots,6\,\}\qquad
d( F_i^j(\Gamma), F_l(\Lambda(n)))\,\geq\,
 d(y_m,F_l(\Lambda(n))) \,.$$
Since the event
$\cE(\Lambda(n),c\ln n)$ occurs, there exists $y_{m+1}$ in
$F_i^j(\Gamma)$ such that
$$T_{\Lambda(n)}(y_m, y_{m+1})\,\leq\,c\ln n\,.$$
With this choice of $y_{m+1}$,
the sequence
$y_0,\dots,y_{m+1}$ satisfies the four conditions above.
We prove next that the construction stops, i.e., that the sequence
enters the box $\Lambda(3n/4)$ after a finite number of steps.
In fact, every three steps, the distance to the boundary of $\Lambda(n)$
is doubling:
$$\forall l\in\{\,1,\dots,m-3\,\}\qquad
 d(y_{l+3},\din\Lambda(n))\,\geq\,2
 d(y_{l},\din\Lambda(n))
 \,.$$
It follows that
 $$d(y_{m},\din\Lambda(n))\,\geq\,2^{\big\lfloor 
\displaystyle {m-1\over 3}\big\rfloor}
\,.$$
If the construction has not stopped after $m$ steps, then
$y_{m-1}$ is still outside of $\Lambda(3n/4)$, thus
 $$d(y_{m-1},\din\Lambda(n))\,\leq\,\frac{n}{4}+1\,.$$
These two inequalities imply that
 $$
2^{\big\lfloor 
\displaystyle {m-2\over 3}\big\rfloor}
 \,\leq\,\frac{n}{4}+1\,,$$
thus the construction stops at some step $m^*$
satisfying
$m^*\leq c'\ln n$, where $c'$ is a positive constant.
Now the point $y_{m^*}$ is inside the box $\Lambda(3n/4)$ and we have

$$
T_{\Lambda(n)}(x, y_{m^*})\,\leq\,
T_{\Lambda(n)}(x, y_{1})\,+\,
\sum_{0< m<m^*}T_{\Lambda(n)}(y_m, y_{m+1})\,\leq\,3+cc'(\ln n)^2\,.$$
The site $y_{m^*}$ belongs to 
the box $\Lambda(3n/4)$ and its distance to the boundary of $\Lambda(n)$
is larger or equal than $n/4$. By using a few more cubic boxes of side $n/4$
(nine boxes are certainly enough), we can join $y_{m^*}$ to the box
$\Lambda(n/4)$ with a path having at most $9c\ln n$ closed sites.
We concatenate the two paths in order to obtain the desired estimate.
\end{proof}
\section{The tiling of the sphere}
We denote by $S$ the two dimensional sphere of $\R^3$. 
We consider the hyperplanes
of equations:
$$\displaylines{x=0\,,\quad y=0\,,\quad z=0\,,\quad\cr
x=y\,,\quad
x=-y\,,\quad
x=z\,,\quad
x=-z\,,\quad
y=z\,,\quad
y=-z\,.}$$
Let $\cS$ be the set of the orthogonal symmetries with respect to
these hyperplanes.
These hyperplanes induce a tiling of the
sphere $S$ 
into $48$ spherical triangles. We denote by $\cT$ the collection
of these triangles.
The vertices of the triangles of $\cT$ are the vertices of a convex polyhedron
which is a Catalan solid, it is called
the disdyakis dodecahedron or the hexakis octahedron \cite{HO,WK}.
The group of the isometries generated by $\cS$ acts transitively
on the collection $\cT$ of spherical triangles.
Let us consider one of these triangles, for instance the triangle
having for vertices
$$(1,0,0)\,,\quad
\Big(\frac{1}{\sqrt 3},
\frac{1}{\sqrt 3},
\frac{1}{\sqrt 3}\Big)\,,\quad
\Big(\frac{1}{\sqrt 2},
\frac{1}{\sqrt 2},
0\Big)\,.$$
The longest arc of this triangle is the arc joining the
vertices
$(1,0,0)$ and
$\big(1/{\sqrt 3},
{1}/{\sqrt 3},
{1}/{\sqrt 3}\big)$ and its length is
$\arccos
\big(1/{\sqrt 3}\big)<0.96$. 
%
Let $r>0$. 
We define
$$ B_r\,=\,
\big\{\,(x,y,z)\in\Z^3:
x^2+y^2+z^2\leq r^2\,\big\}\,.$$
Let $T$ belong to $\cT$. We define
$$T_r\,=\,
\big\{\,x\in B_r:
d(x, rT)\leq 3\,\big\}\,.$$
We have
$$\forall y,z\in
T_r\qquad
|y-z|\,\leq\,6+r\,\text{diameter}(T)\,\leq\,6+0.96r\,.$$
Therefore
$$\forall r\geq 600\quad
\forall y,z\in
T_r\qquad
|y-z|\,\leq\,0.97 r\,.$$
For any symmetry $s$ in $\cS$, we have
$s\big( B_r)= B_r$ and
$s\big(T_r\big)=s(T)_r$.
The percolation model is invariant under the action of $\cS$, therefore
$$P\big(0\longleftrightarrow
T_r\text{ in }  B_r \big)\,=\,
P\big(0\longleftrightarrow
s(T)_r\text{ in }  B_r \big)\,$$
and the above probability is the same for
any triangle $T$ in $\cT$.
Moreover
$$\din B_r\,\subset\,
\bigcup_{T\in\cT} T_r\,.$$
Proceeding as in the case of the cube, we have, for any $r>0$,
$$T_{ B_r } (0,\din  B_r )\,\geq\,
\min_{T\in\cT} \,
T_{ B_r } (0,T_r )
$$
and, by the FKG inequality,
$$\displaylines{
P\big(
T_{ B_r } (0,\din  B_r )
\geq k\big)
\,\geq\,
P\big( \forall {T\in\cT} \quad
T_{ B_r } (0,T_r )
\geq k\big)\cr
\,\geq\,
\prod_{T\in\cT}
P\big( 
T_{ B_r } (0,T_r )
\geq k\big)
\,.
}$$
By symmetry of the model, all the probabilities appearing in the product
are equal.
It follows then from lemma~\ref{BK} that
$$
\forall {T\in\cT} \quad\forall r>0\qquad
P\big( 
T_{ B_r } (0,T_r )
\geq k\big)
\,\leq\,
(1-\theta(p))^{k/48}\,.$$
We deal next with translates of $ S$.
We consider the event $\cF(\Lambda,k)$ defined as follows:
for any $x\in\Lambda\cap\Z^3$ and any $r>0$ such that $x+B_r\subset\Lambda$ 
and such that
the boundary of $x+B_r$ intersects the lattice $\Z^3$,
the site $x$ can be joined to each of the $48$ sets 
$x+T_r$, $T\in\cT$,
with a path having at most $k$ closed sites.
More precisely,
$$\displaylines{
\cF(\Lambda,k)\,=\,
\Big\{\,\forall x\in\Lambda\cap\Z^3\quad\forall r>0\qquad
\hfill\cr
\hfill
x+B_r\subset\Lambda,\,
(x+\partial B_r)\cap \Z^3\neq\varnothing
\quad
\Longrightarrow
\quad
\forall {T\in\cT} \quad
T_{ B_r } (0,T_r )
\leq k\,
\Big\}
\,.
}$$
\begin{proposition}
\label{dode}
Let $p$ be such that $\theta(p)>0$ and
let $\alpha>0$. There exists a constant 
$c$, depending on $\alpha$ and $p$,
such that
$$\forall n\geq 2\qquad 
P\big( \cF(\Lambda(n),c\ln n)\big)
\,\geq\,1-\frac{c}{n^\alpha}\,.$$
\end{proposition}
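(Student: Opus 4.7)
The plan is to follow the template of Proposition~\ref{cube}, with balls and spherical triangles replacing cubes and square faces. The single-event bound is already in hand: combining the inclusion $\din B_r\subset\bigcup_{T\in\cT}T_r$, the FKG inequality, the symmetry of the model under $\cS$, and Lemma~\ref{BK}, the excerpt has just established
$$P\big(T_{B_r}(0,T_r)\geq k\big)\,\leq\,(1-\theta(p))^{k/48}$$
for every $T\in\cT$ and every admissible $r>0$. Only a union bound remains.

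First I would write
$$P\big(\cF(\Lambda(n),k)^c\big)\,\leq\,\sum_{x}\sum_{r}\sum_{T\in\cT}
P\big(T_{x+B_r}(x,x+T_r)>k\big),$$
where the sums run over $x\in\Lambda(n)\cap\Z^3$, over $r>0$ such that $x+B_r\subset\Lambda(n)$ and $(x+\partial B_r)\cap\Z^3\neq\varnothing$, and over $T\in\cT$. Translation invariance identifies each summand with $P(T_{B_r}(0,T_r)>k)$, and the symmetry of the percolation model under $\cS$ further makes this common value independent of $T$, so each term is bounded by $(1-\theta(p))^{k/48}$.

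Next I would count the admissible triples polynomially in $n$. The centre $x$ contributes at most $|\Lambda(n)|=(2n+1)^3$ choices. For the radius, the condition $(x+\partial B_r)\cap\Z^3\neq\varnothing$ forces $r^2$ to be a positive integer (a sum of three squares), and $r$ is bounded by the diameter of $\Lambda(n)$, so $r^2\leq 12n^2$ and there are at most $12n^2$ admissible radii. The factor $48$ for $T$ gives an overall $O(n^5)$ count, so
$$P\big(\cF(\Lambda(n),k)^c\big)\,\leq\,Cn^5(1-\theta(p))^{k/48}$$
for some absolute constant $C$. Taking $k=c\ln n$ with $c$ chosen large enough in terms of $\alpha$ and $p$ drives the right-hand side below $c/n^\alpha$ for every $n\geq 2$, as required. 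No genuine obstacle appears: this is a direct transcription of the argument for cubes, and the only piece not already present there is the harmless polynomial bound on the number of admissible radii.
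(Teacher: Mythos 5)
Your proposal is correct and follows essentially the same route as the paper: a union bound over the centre $x$, the admissible radii $r$, and the $48$ triangles $T\in\cT$, with each term controlled by the estimate $P(T_{B_r}(0,T_r)\geq k)\leq(1-\theta(p))^{k/48}$ already derived from FKG, the symmetries in $\cS$, and Lemma~\ref{BK}. The only (harmless) difference is in counting the radii: you observe that $r^2$ must be an integer at most $12n^2$, getting $O(n^5)$ triples, whereas the paper simply bounds the number of pairs $(x,r)$ by $|\Lambda(n)|^2$; both are polynomial in $n$, which is all that is needed.
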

\begin{proof}
The important point is to notice that the number of choices for the site $x$
and the radius $r$ is bounded by $|\Lambda(n)|^2$.
The rest of the proof is the same as proposition~\ref{cube}. 
\end{proof}
\begin{proposition}
\label{tdode}
Let $n\geq 1$ and suppose that the event $\cF(\Lambda(n),c\ln n)$ occurs.
There exists a constant $c'$ such that
$$\forall x,y\in\Lambda(n/4)\qquad
T_{\Lambda(n)}(x,y)\,\leq\,c'(\ln n)^2\,.$$
\end{proposition}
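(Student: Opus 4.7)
The plan is to reduce to the special case where the target is the origin. By sub-additivity of the travel time along concatenated paths,
$$T_{\Lambda(n)}(x,y)\,\leq\,T_{\Lambda(n)}(x,0)\,+\,T_{\Lambda(n)}(0,y)\,,$$
so it suffices to prove $T_{\Lambda(n)}(x,0)\leq c''(\ln n)^2$ for every $x\in\Lambda(n/4)$ and then apply this bound to both endpoints.

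To build a path from $x$ to $0$, I will mimic the construction of proposition~\ref{tcube}, replacing cubes by balls and the $24$ sub-squares by the $48$ thickened spherical triangles $T_r$, $T\in\cT$. Starting from $y_0=x$, I iteratively set $r_k=|y_k|$ and apply the event $\cF(\Lambda(n),c\ln n)$ inside $y_k+B_{r_k}$, targeting the triangle $T\in\cT$ that contains the unit direction $-y_k/|y_k|$ pointing from $y_k$ toward the origin. This produces a point $y_{k+1}\in y_k+T_{r_k}$ with $T_{\Lambda(n)}(y_k,y_{k+1})\leq c\ln n$. The geometric heart of the argument is that the origin itself lies in the same thickened triangle: $-y_k$ has norm exactly $r_k$ and direction in $T$, hence $-y_k\in r_kT\subset T_{r_k}$. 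Both $y_{k+1}-y_k$ and $-y_k$ therefore belong to $T_{r_k}$, whose diameter is bounded by $0.97\,r_k$ once $r_k\geq 600$, yielding the contraction
$$|y_{k+1}|\,=\,|(y_{k+1}-y_k)-(-y_k)|\,\leq\,0.97\,|y_k|\,.$$

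For the iteration to be legitimate I must verify that $y_k+B_{r_k}\subset\Lambda(n)$ at every step. Since the sequence $|y_k|$ is decreasing, one has $|y_k|_\infty+r_k\leq 2|y_k|\leq 2|x|\leq n\sqrt{3}/2<n$, so the ball indeed fits, and the lattice condition $(y_k+\partial B_{r_k})\cap\Z^3\neq\varnothing$ in the definition of $\cF$ is automatic because $0$ itself realizes it. The geometric contraction forces the iteration to reach $|y_m|<600$ after $m=O(\ln n)$ steps, of total cost at most $cm\ln n$. From the terminal $y_m$, any direct lattice path to the origin has length at most $3|y_m|_\infty<1800$, stays in $\Lambda(n)$, and contributes only a bounded number of closed sites to the travel time. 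Summing everything gives $T_{\Lambda(n)}(x,0)\leq c''(\ln n)^2$, hence the desired bound for $T_{\Lambda(n)}(x,y)$. The most delicate point in the writeup will be the simultaneous membership of $y_{k+1}$ and $0$ in the single set $T_{r_k}$, which is precisely what turns the small-diameter property of the spherical triangles into geometric convergence toward the origin.
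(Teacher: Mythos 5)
Your proof is correct and rests on exactly the same mechanism as the paper's: iterate the event $\cF(\Lambda(n),c\ln n)$ on balls, using that the target and the newly constructed point both lie in a single thickened spherical triangle $T_r$ of diameter at most $0.97\,r$ (for $r\geq 600$), so that the distance to the target contracts geometrically in $O(\ln n)$ steps of cost $c\ln n$ each, with a bounded final correction once the distance drops below $600$. The one genuine difference is your preliminary reduction to the fixed target $0$ via $T_{\Lambda(n)}(x,y)\leq T_{\Lambda(n)}(x,0)+T_{\Lambda(n)}(0,y)$; the paper runs the contraction directly from $x$ toward $y$, taking $r=|y_m-y|$ and the triangle of $\cT$ seen from $y_m$ in the direction of $y$. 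Your variant costs a factor $2$ in the constant but buys a cleaner domain-containment check: with $r_k=|y_k|$ every point of $y_k+B_{r_k}$ has Euclidean norm at most $2|y_k|\leq \sqrt3\,n/2<n$, so the inclusion in $\Lambda(n)$ is immediate, whereas in the paper's direct version $r=|y_m-y|$ can be as large as $\sqrt3\,n/2$ while $y_m$ sits at sup-norm distance up to $n/4$ from the origin, so the asserted inclusion $y_m+B_r\subset\Lambda(n)$ actually needs an extra word (or a slightly smaller inner box than $\Lambda(n/4)$) when $x$ and $y$ are near opposite corners. Your key geometric observation --- that $-y_k$ has norm exactly $r_k$ and direction in some $T\in\cT$, hence $0\in y_k+r_kT\subset y_k+T_{r_k}$ alongside $y_{k+1}$ --- is precisely the justification the contraction step requires, and the rest (termination after $O(\ln n)$ steps, a direct lattice path of at most $1800$ sites at the end, adjustment of constants for small $n$) matches the paper.
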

\begin{proof}
We build a sequence starting at $x$ and which converges at geometric speed
towards $y$, and which stops when it is at distance less than $600$ from $y$.
We start from $y_0=x$.
Suppose that $y_0,\dots, y_m$ have been built in such a way that 
for any $l\in\{\,1,\dots,m\,\}$:
\smallskip

\noindent
$\bullet\quad$ $y_{l}\in\Lambda(n/4)$.
\smallskip

\noindent
$\bullet\quad$ $|y_{l}-y|\,\leq\,0.97\, |y_{l-1}-y|$.
\smallskip

\noindent
$\bullet\quad$
$T_{\Lambda(n)}(y_{l-1},y_l)\,\leq\,c\ln n$.
\smallskip

\noindent
We build now $y_{m+1}$.
Let $r>0$ be such that
$y$ is on the boundary of $y_m+B_r$. 
Since $y$ and $y_m$ are in $\Lambda(n/4)$, then
$y_m+B_r$ is included in $\Lambda(n)$.
If $r<600$, then 
$|y_{m}-y|\,<\,600$ and the construction is finished.
Suppose that $r\geq 600$.
There exists $T\in\cT$ such that
$y$ is in $y_m+T_r$. 
Since the event $\cF(\Lambda(n),c\ln n)$ occurs, then there exists
$y_{m+1}\in y_m+T_r$ such that
$$T_{y_m+ B_r}(y_m, y_{m+1})\,\leq\,c\ln n\,.$$
Since $r\geq 600$, then 
$$ |y_{m+1}-y|\,\leq\,0.97 |y_{m}-y|\,,$$
and the sequence $y_0,\dots, y_{m+1}$ satisfies the required constraints.
Since the 
sequence converges at geometric speed
towards $y$, after at most $c'\ln n$ steps, where $c'$ is a constant,
it is at distance less than $600$ from $y$
and the construction terminates at some index $m^*\leq c'\ln n$.
Now we have
$$
T_{\Lambda(n)}(x, y)\,\leq\,
\sum_{0\leq m<m^*}T_{\Lambda(n)}(y_m, y_{m+1})\,+\,
T_{\Lambda(n)}(y_{m^*}, y)
\,\leq\,cc'(\ln n)^2+1800\,.$$
By enlarging the constants, we obtain the statement of the proposition.
\end{proof}
\section{Completion of the proof of theorem~\ref{fm}}
\noindent
We need only to prove the statement for $n$ large enough. Indeed,
if it holds for $n\geq N$, we simply enlarge the constant $\kappa$ so that
$\kappa(\ln2)^2\,\geq\,3(2N+1)$. We have then
$$\forall n\leq N\quad
\forall x,y\in\Lambda(n)\quad
T_{\Lambda(n)}(x,y)\,\leq\,\kappa(\ln 2)^2
\,\leq\,\kappa(\ln n)^2
\,.$$
Let $\alpha>0$. By propositions~\ref{cube} and~\ref{dode}, there exists
a constant $c>0$ such that
$$\forall n\geq 2\qquad 
P\big( 
\cE(\Lambda(n),c\ln n)\cap
\cF(\Lambda(n),c\ln n)\big)
\,\geq\,1-\frac{2c}{n^{\alpha+1}}\,.$$
For $n$ large enough, we have 
${2c}/n^{\alpha+1}<1/n^\alpha$.
Suppose now that the events $\cE(\Lambda(n),c\ln n)$ and
$\cF(\Lambda(n),c\ln n)$ occur simultaneously. Let $x,y\in\Lambda(n)$.
By proposition~\ref{tcube}, there exist $x^*,y^*$ in 
$\Lambda(n/4)$ such that
$$T_{\Lambda(n)}(x,x^*)\,\leq\,c'(\ln n)^2\,,\qquad
T_{\Lambda(n)}(y,y^*)\,\leq\,c'(\ln n)^2
\,.$$
By proposition~\ref{tdode}, since $x^*,y^*$ are in
$\Lambda(n/4)$, then
$$T_{\Lambda(n)}(x^*,y^*)\,\leq\,c'(\ln n)^2\,.$$
We conclude that
$$T_{\Lambda(n)}(x,y)\,\leq\,
T_{\Lambda(n)}(x,x^*)\,+\,
T_{\Lambda(n)}(x^*,y^*)\,+\,
T_{\Lambda(n)}(y^*,y)\,\leq\,
3c'(\ln n)^2\,.$$
This holds for any $x,y$ in $\Lambda(n)$, so we are done.
\bibliographystyle{amsplain}
\bibliography{travel}

\providecommand{\bysame}{\leavevmode\hbox to3em{\hrulefill}\thinspace}
\providecommand{\MR}{\relax\ifhmode\unskip\space\fi MR }
\providecommand{\MRhref}[2]{%
  \href{http://www.ams.org/mathscinet-getitem?mr=#1}{#2}
}
\providecommand{\href}[2]{#2}
\begin{thebibliography}{1}

\bibitem{GR}
Geoffrey Grimmett, \emph{Percolation}, second ed., Grundlehren der
  Mathematischen Wissenschaften, vol. 321, Springer-Verlag, Berlin, 1999.

\bibitem{HO}
Alan Holden, \emph{Shapes, space, and symmetry}, Dover Publications Inc., New
  York, 1991, Reprint of the 1971 original.

\bibitem{WK}
Wikipedia, \emph{http://en.wikipedia.org/wiki/disdyakis$\_$dodecahedron}.

\end{thebibliography}
\end{document}